\documentclass{elsarticle}

\usepackage[centertags]{amsmath}
\usepackage{amsfonts}
\usepackage{amssymb}
\usepackage{newlfont}

\newtheorem{theorem}{Theorem}

\newtheorem{problem}{Problem}

\newtheorem{definition}{Definition}
\newtheorem{example}{Example}

\newenvironment{proof}{\medskip{\em Proof.}}{\par}

\newfont{\gothic}{eufm9 scaled 1200}

\begin{document}

\begin{frontmatter}
\title{Remarks on Haar meager sets and Haar null sets in spaces of sequences}
\author{Eliza  Jab{\l}o{\'n}ska}
\ead{elizapie@prz.edu.pl}

\address{Department of Mathematics, Rzesz{\'o}w University of
Technology, Powsta\'{n}c\'{o}w~Warszawy~12, 35-959~Rzesz{\'o}w, POLAND}


\begin{abstract}
In the paper we will show how to construct a Haar meager set (consequently meager) which is not Haar null, and conversely, a meager Haar null set which is not Haar meager in spaces of sequences: $l_p$ with $p\geq1$, $c_0$ or $c$. It refers to the paper \cite{Darji}.\end{abstract}
\begin{keyword}
Haar meager set, Haar null set, meager set \MSC 28A05, 54E52, 58A05.
\end{keyword}
\end{frontmatter}


\section{Introduction}

In 1972 J.P.R.~Christensen defined "Haar null" sets in an abelian Polish group (a topological abelian group with a complete separable
metric) in such a way that in a locally compact group it is equivalent to the notion of Haar measure zero set. More precisely, in a fixed abelian Polish group $X$ a set $A\subset X$ is called \textit{Haar null} if there is a Borel probability measure $\mu$ on $X$ and a Borel set $B\subset X$ such that $A\subset B$ and $\mu(x+B)=0$ for all $x\in X$.
These definition has been extended further to nonabelian
groups by J.~Mycielski \cite{Myc}.
Unaware of the result of Christensen, B.R.~Hunt, T.~Sauer and J.A.~Yorke \cite{HSY}-\cite{HSY1} found this notation again, but in a topological abelian group with a complete metric (not necessary separable).

In 2013 U.B.~Darji introduced another family of "small" sets in an abelian Polish group, which is equivalent to the notion of meager sets in a locally compact group. In an abelian Polish group $X$ he called a set $A\subset X$ \textit{Haar meager} if there is a Borel set $B\subset X$ with $A\subset B$, a compact metric space $K$ and a continuous function $f:K\to X$ such that $f^{-1}(B+x)$ is meager in $K$ for all $x\in X$.

The main aim of the paper is to show easy constructions of a Haar meager, but not Haar null set and, conversely, a meager Haar null set which is not Haar meager in spaces of sequences: $l_p$ with $p\geq1$, $c_0$ or $c$.

\section{The main results}

\begin{definition}
{\em Let $X$ be an abelian Polish group, $\mathcal{B}(X)$ be the Borel $\sigma$--algebra on $X$ and denote by $\mathcal{F}(X)$ the family of all sets $A\subset X$ such that
$$
\forall_{\;K\subset X\mbox{\small{-compact}}}\;\; \exists _{\;x_K\in X}\;\;K+x_K\subset A.
$$}
\end{definition}

In fact $\mathcal{F}(X)$ is a proper linearly invariant $\sigma$--filter. What is interesting,
each set $A\in \mathcal{F}(X)\cap \mathcal{B}(X)$ is neither Haar null (in view of the Ulam theorem), nor Haar meager.

S. Solecki \cite{Solecki}, and also E. Matou\u{s}kov\'{a} and M. Zelen\'{y} \cite{Matouse}, showed how to find a closed nowhere dense set from the family $\mathcal{F}$ in any abelian non-locally compact Polish group. We use this fact to construct a Haar meager, but not Haar null set, as well as a meager Haar null set which is not Haar meager in spaces of sequences: $l_p$ with $p\geq1$, $c_0$ or $c$.\\

First we prove two theorems, which we will use in further considerations.

\begin{theorem}\label{2}
Let $X$, $Y$ be an abelian Polish group. If $A\subset X$ is Haar meager and $B\subset B_0$ for some $B_0\in\mathcal{B}(Y)$, then  the set $A\times B\subset X\times Y$ is Haar meager.
\end{theorem}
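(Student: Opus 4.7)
The plan is to use the witness for $A$ being Haar meager essentially unchanged, simply extending the continuous map trivially in the $Y$-coordinate.

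First I would unpack the definition applied to $A$: there exist a Borel set $B_A \subset X$ with $A \subset B_A$, a compact metric space $K$, and a continuous $f: K \to X$ such that $f^{-1}(B_A + x)$ is meager in $K$ for every $x \in X$. I would also note that the product $B_A \times B_0$ is Borel in $X \times Y$ (as the intersection of the preimages of $B_A$ and $B_0$ under the two coordinate projections), and clearly contains $A \times B$.

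The witness I would propose for $A \times B$ being Haar meager uses the same compact metric space $K$, the Borel superset $B_A \times B_0$, and the continuous map $g : K \to X \times Y$ defined by $g(k) = (f(k), y_0)$ for an arbitrary fixed point $y_0 \in Y$. For any $(x, y) \in X \times Y$ one computes
\[
g^{-1}\bigl((B_A \times B_0) + (x,y)\bigr)
= \{k \in K : f(k) \in B_A + x \text{ and } y_0 \in B_0 + y\}.
\]
If $y_0 \notin B_0 + y$, this preimage is empty and hence meager in $K$. If $y_0 \in B_0 + y$, it coincides with $f^{-1}(B_A + x)$, which is meager in $K$ by the choice of $f$. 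Either way the meagerness condition required by the definition holds, so $A \times B$ is Haar meager in $X \times Y$.

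There is no real obstacle here; the only thing to be careful about is that the definition of Haar meager imposes no restriction on the compact metric space $K$ or on how ``richly'' the continuous map hits the ambient group, so collapsing the $Y$-direction to a single point $y_0$ is perfectly legitimate. The Borelness of $B_A \times B_0$ (which only uses that $B_0$ is assumed Borel) is the only place the hypothesis on $B$ is used.
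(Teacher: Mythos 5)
Your proof is correct, and it is a leaner variant of the paper's argument. The paper also reuses the witness $(A_0,K,f)$ for $A$, but it picks an arbitrary compact $L\subset Y$, maps $K\times L\to X\times Y$ by $(k,l)\mapsto(f(k),l)$, computes the preimage of a translate as $f^{-1}(A_0+x)\times[(B_0+y)\cap L]$, and then invokes the Kuratowski--Ulam theorem to conclude that this product is meager in $K\times L$. You instead collapse the $Y$-direction to a single point $y_0$, i.e.\ you effectively take $L=\{y_0\}$; the preimage of a translate is then either empty or exactly $f^{-1}(B_A+x)$, so meagerness follows by direct inspection with no appeal to Kuratowski--Ulam (or to the weaker fact that a meager set times a factor is meager in the product). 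Both proofs are valid, since the definition of Haar meagerness places no richness requirement on the compact space or the map; what your version buys is the elimination of the product-category step, while the paper's version keeps the map surjective onto $f(K)\times L$, which costs the extra lemma but changes nothing essential. The only hypothesis on $B$ used in either argument is that it sits inside the Borel set $B_0$, exactly as you observe.
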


\begin{proof}
Assume that $A\subset X$ is Haar meager in $X$, i.e. there are a set $A_0\in\mathcal{B}(X)$ with $A\subset A_0$, a compact metric space $K$ and a continuous function $f:K\to X$ such that $f^{-1}(A_0+x)$ is meager in $K$ for each $x\in X$. Take any compact set $L\subset Y$ and define a continuous function $g:K\times L\to X\times Y$ in the following way:
$$
g(k,l)=(f(k),l)\;\;\mbox{for every}\;\;(k,l)\in K\times L.
$$
Then,
$$
\begin{array}{r}
g^{-1}((A_0\times B_0)+(x,y))=g^{-1}((A_0+x)\times (B_0+y))\;\;\;\;\;\;\\[1ex]
=f^{-1}(A_0+x)\times [(B_0+y)\cap L]
\end{array}$$
for each $(x,y)\in X\times Y$. Since the set $f^{-1}(A_0+x)$ is meager in $K$, by the Kuratowki-Ulam theorem the set $g^{-1}((A_0\times B_0)+(x,y))$ is meager in $K\times L$. Clearly, $A\times B\subset A_0\times B_0$ and $A_0\times B_0\in\mathcal{B}(X\times Y)$, what ends the proof.
\end{proof}

\begin{theorem}\label{1}
Let $X$, $Y$ be an abelian Polish group. For every set $A\in \mathcal{B}(X)\cap \mathcal{F}(X)$ and non-Haar meager $B\in \mathcal{B}(Y)$ the set $A\times B\subset X\times Y$ is not Haar meager.
\end{theorem}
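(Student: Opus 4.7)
\medskip
\textbf{Plan.} I would argue by contradiction. Suppose $A\times B$ is Haar meager, witnessed by a Borel set $C\in\mathcal{B}(X\times Y)$ with $A\times B\subset C$, a compact metric space $M$, and a continuous $h:M\to X\times Y$ such that $h^{-1}(C+(x,y))$ is meager in $M$ for all $(x,y)\in X\times Y$. The goal is to manufacture from $h$ a witness that $B$ is Haar meager, contradicting the hypothesis on $B$.

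The key observation is that the projection $K:=\pi_X(h(M))\subset X$ is compact, being the continuous image of a compact set. Since $A\in\mathcal{F}(X)$, there is some $x_K\in X$ with $K+x_K\subset A$. This means that after we translate $h$ by $(x_K,\cdot)$ in the $X$-coordinate, the first coordinate always lands in $A$ automatically, so whether the translated point belongs to $A\times B$ is governed purely by the $Y$-coordinate.

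Concretely, define the continuous map $g:=\pi_Y\circ h:M\to Y$. For any $y\in Y$ and any $m\in g^{-1}(B+y)$, one has $\pi_Y h(m)-y\in B$ and $\pi_X h(m)+x_K\in K+x_K\subset A$, so
\[
h(m)+(x_K,-y)\;=\;\bigl(\pi_X h(m)+x_K,\;\pi_Y h(m)-y\bigr)\;\in\;A\times B\;\subset\;C,
\]
which gives $m\in h^{-1}(C+(-x_K,y))$. Hence $g^{-1}(B+y)\subset h^{-1}(C+(-x_K,y))$, and the right-hand side is meager in $M$ by the assumption on $h$. Since $B\in\mathcal{B}(Y)$, taking $B$ itself as the Borel hull shows that $M$, $g$, and $B$ witness Haar meagerness of $B$, contradicting the hypothesis.

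The only place where any care is needed is lining up the signs in the translation $(x_K,-y)$ and verifying that the inclusion $A\times B\subset C$ transfers meagerness from $h^{-1}(C+\cdot)$ to $g^{-1}(B+\cdot)$; everything else (compactness of $K$, Borel-ness of $B$, continuity of $g$) is routine. The main conceptual step — and really the whole content of the proof — is using the $\mathcal{F}(X)$-property to collapse the problem to one about $Y$ alone.
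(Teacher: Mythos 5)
Your proof is correct and is essentially the paper's argument run in the contrapositive direction: both decompose the map into its $X$- and $Y$-components, use the $\mathcal{F}(X)$-property on the compact image of the $X$-component to absorb that coordinate into $A$, and thereby reduce everything to the non-Haar-meagerness of $B$. The sign bookkeeping in $g^{-1}(B+y)\subset h^{-1}(C+(-x_K,y))$ checks out, and your handling of an arbitrary Borel hull $C$ is fine, so there is nothing to add.
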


\begin{proof}
Clearly, $A\times B\in\mathcal{B}(X\times Y)$. Take a compact metric space $K$ and a continuous function $f:K\to X\times Y$. Then there are continuous functions $f_X:K \to X$ and $f_Y:K\to Y$ such that
$$f(z)=(f_X(z),f_Y(z))\;\;\mbox{for each}\;\;z\in K.$$
 The set $f_X(K)$ is compact in $X$ and $A\in\mathcal{F}(X)$, so $A\supset f_X(K)+x_K$ for some $x_K\in X$. Hence $f_X^{-1}(A-x_K)\supset K$. Since $B$ is not Haar meager in $Y$, $f_Y^{-1}(B+y_K)$ is comeager in $K$ for some $y_K\in Y$. Moreover:
$$
\begin{array}{l}
f^{-1}((A\times B)+(-x_k,y_K))=f^{-1}((A-x_K)\times (B+y_K))\\[1ex]
\;\;\;\;\;\;\;\;\;\;\;\;\;\;\;\;\;=f_X^{-1}(A-x_K)\cap f_Y^{-1}(B+y_K)\supset K\cap f_Y^{-1}(B+y_K).
\end{array}
$$
Thus $f^{-1}((A\times B)+(-x_K,y_K))$ is comeager in $K$ and, consequently,  the set $A\times B$ is not Haar meager in $X\times Y$.
\end{proof}
\vspace{0.2cm}

The above theorem suggest the following

\begin{problem}
{\em Let $X$, $Y$ be an abelian Polish group. Is it rue or false that for every non-Haar meager sets $A\in\mathcal{B}(X)$ and $B\in\mathcal{B}(Y)$ the set $A\times B\subset X\times Y$ is not Haar meager?}
\end{problem}

A negative answer implies the same question under additional assumption that one of abelian Polish group $X$, $Y$ is locally compact.

\section{Applications}

Now, consider the space $X$ as one of the following spaces of sequences: $c_0$, $c$ or $l_p$ with $p\geq1$. Such spaces have a very nice property:  $X=\mathbb{R}\times X$.
\vspace{0.5cm}

 Fix $A\in \mathcal{B}(X)\cap\mathcal{F}(X)$. Let $S:=B\times A\subset \mathbb{R}\times X$, where $B\subset \mathbb{R}$ is a meager set of positive Lebesgue measure.

By the analogue of the Fubini theorem \cite[Theorem~6]{Ch} it is easy to observe that $S$ is not Haar null, since $S(a)=B$ is the set of the positive Lebesgue measure for each $a\in A$ and $A$ is not Haar null. Moreover, in view of Theorem~\ref{2}, the set $S$ is Haar meager.
In this way we constructed the set $S$, which is \textbf{Haar meager} (consequently \textbf{meager}), but \textbf{not Haar null} in $X$.
\vspace{0.5cm}

Now, fix $A\in \mathcal{B}(X)\cap\mathcal{F}(X)$ once again. Let $T:=C\times A\subset \mathbb{R}\times X$, where $C\subset \mathbb{R}$ is a comeager set of the Lebesgue measure zero.

By an analogue of the Fubini theorem \cite[Theorem~6]{Ch} we can easy deduce that $T$ is Haar null in $X$, because $T(a)=C$ is the set of the Lebesgue measure zero for each $a\in A$ and $T(a)=\emptyset$ for each $a\in X\setminus A$. Moreover, by Theorem~\ref{1} the set $T$ is not Haar meager in $\mathbb{R}\times X=X$. Finally, $T$ is meager according to the Kuratowski-Ulam theorem, because the set $A$ is meager. In this way we constructed the set $T$, which is \textbf{Haar null}, \textbf{meager}, but \textbf{not Haar meager} in $X$.

\begin{example}{\em
Define the set $A=\{(x_n)_{n\in\mathbb{N}}\in c_0: \;\forall_{n\in\mathbb{N}}\;x_n\geq0\}$. Such set is a closed nowhere dense set (see \cite[Example 6.2]{Mat}), which belongs to the filter $\mathcal{F}(c_0)$.

Let $S:=B\times A\subset c_0$, where $B\subset \mathbb{R}$ is a meager set of the positive Lebesgue measure. Then the set $S$ is Haar meager, but not Haar null in $c_0$.

Let $T:=C\times A\subset c_0$, where $C\subset \mathbb{R}$ is a comeager set of the Lebesgue measure zero. Such set $T$ is Haar null, meager, but not Haar meager in $c_0$.}
\end{example}

\begin{problem}
{\em Let $X$ be any abelian Polish group, which is not locally compact. How to find a Haar meager but not Haar null set and, conversely, how to construct a Haar null, meager but not Haar meager set in $X$?}
\end{problem}

\end{document}